\documentclass[11pt,oneside,fleqn]{article}

\usepackage{amsfonts}
\usepackage{amssymb}
\usepackage{fancyhdr}
\usepackage{comment}
\usepackage{natbib}
\usepackage{amsmath}
\usepackage{amsthm}
\usepackage{dsfont}
\usepackage{graphicx}
\usepackage{enumerate}
\usepackage{color, soul}
\usepackage{cases}
\newtheorem{theorem}{Theorem}

\newtheorem{lemma}{Lemma}

%
%

\usepackage{euscript}
\def\text#1{\hbox{\rm#1}}
\def\qt#1{\qquad\text{#1}}
\def\statespace{{\mathbb S}}
\def\statespaceT{\statespace_T}
\def\aa{{\EuScript A}}

\def\ee{{\EuScript E}}
\def\ff{{\EuScript F}}
\def\hh{{\EuScript H}}
\def\gmap{{\EuScript G}}
\def\nn{{\EuScript N}}

\def\ssYWJ{\statespace_{\text{\tiny\text{YWJ}}}}
\def\bone{\mathds{1}}

\def\PP{\mathbb{P}}
\def\RR{\mathbb{R}}
\def\UU{\mathbb{U}}	
\def\CORE{\mathbb{K}}	
\def\PPtheta{\mathbb{P}_\theta}
\def\That{\widehat{T}}
\def\SUM{\sum\nolimits}

\def\tlam{\widetilde\lambda}
\def\hops{\text{\scshape hops}}
\def\Sqrt{\mathpalette\DHLhksqrt}
\def\DHLhksqrt#1#2{%
\setbox0=\hbox{$#1\sqrt{#2\,}$}\dimen0=\ht0
\advance\dimen0-0.2\ht0
\setbox2=\hbox{\vrule height\ht0 depth -\dimen0}%
{\box0\lower0.4pt\box2}}

\newif\ifshowlabels	

\definecolor{refkey}{gray}{.75}
\definecolor{labelkey}{cmyk}{1,0,0,0.36}

\def\subheading#1{\vspace{10pt plus 1pt minus 1pt}\noindent{\bf #1}\newline}

\usepackage{hyperref}
	\hypersetup{
		colorlinks=true,
		linkcolor=blue,
		citecolor=blue,
		hypertexnames=false,
		pdfpagelayout=OneColumn, 
		pdfview=FitH, 
		pdfstartview=FitH, 
		implicit=false 
		}%

\begin{document}
%
%

\begin{center}

{\large \bf Rapid mixing of a Markov chain for an exponentially weighted aggregation estimator}

\bigskip

David Pollard  \; and \;  Dana Yang

\bigskip

\end{center}


\begin{abstract}
The Metropolis-Hastings method is often used to construct a Markov chain with a given~$\pi$  as its stationary distribution. The method works even if~$\pi$ is known only up to an intractable constant of proportionality.
Polynomial time convergence results for such chains (rapid mixing)  are hard to obtain for high dimensional probability models where the size of the state space potentially grows exponentially with the model dimension. 
In a Bayesian context, \citet*{yang2016computational} (=YWJ) used the path method to prove rapid mixing  for high dimensional linear models. 

This paper proposes a modification of the YWJ approach that 
simplifies the theoretical argument and improves the rate of convergence. The new approach is illustrated by an application to an exponentially weighted  aggregation estimation. 
\end{abstract}


\section{Introduction}\label{intro}
Many statistical problems involve sampling from a probability measure~$\pi$ defined  on a finite set~$\statespace$. For example, Bayesians are usually interested in the case where $\pi=\pi(\cdot \mid Y)$ is  a posterior distribution. In such settings~$\pi$ is often defined as a ratio of a simple numerator with a more complicated denominator, which can be computationally intractable if~$\statespace$ is large.
The Metropolis-Hastings (M-H) method provides one way to approach this problem.

With M-H one constructs a time reversible, irreducible, aperiodic Markov chain~$\{Z_n\}$ on~$\statespace$
with~$\pi$ as its stationary distribution. \
One starts with a ``proposal chain" given by a transition matrix $R$ then defines $P$ via acceptance/rejection of the proposal.  One defines 
\begin{equation}\label{M-H}
P(S,S') = R(S,S')\min \left\{1, \frac{\pi(S')R(S',S)}{\pi(S)R(S,S')}\right\}
\qt{for states $S\ne S'$}
\end{equation}
with the holding probability~$P(S,S)$ then chosen so that $\sum_{S'}P(S,S')=1$.

One hopes the  chain defined by~(\ref{M-H})
will converge rapidly to~$\pi$ even when~$\statespace$ is large.
It is known that the larger the `spectral gap' for~$P$ the faster the convergence. 
The  path method developed by~\cite{diaconis1991geometric} and~\cite{sinclair1992improved} provides lower bounds for the size of the spectral gap. These translate 
 easily into bounds on the mixing times, the number of steps of the chain $\{Z_n\}$ needed before the total variation distance between $\pi$ and the distribution of $Z_n$ becomes smaller than any specified $\epsilon>0$. See Section~\ref{mixing} for a more precise
statement of these results.



\citet*{yang2016computational} (= YWJ) used the M-H method to sample from a posterior distribution for a problem where the observed~$Y$ is modeled as having a $N(Xb,\sigma^2 I_n)$ distribution for an observed $n\times p$ matrix~$X$, with $p$ possibly much larger than $n$. They followed the tradition of studying behavior of the posterior under a model $\PPtheta$ for which $Y\sim N(X\theta, \sigma^2 I_n)$ for a sparse vector $\theta$, that is, a vector whose support set $T:=\{j\in [p]:\theta_j\neq 0\}$ they assumed to have cardinality no greater than some pre-specified (small) value $s^*\geq 1$. One of their main concerns was to determine how the rate of convergence of the Markov chain to its stationary distribution depended on $s^*,n,p,\theta$, the prior, and the various assumed properties of the matrix $X$. 
They used the path method to control the spectral gap.
Their Theorem~2 gave an $\epsilon$-mixing time of order~$O(s_0^2p\log p(n+s_0))$.


We had initially  hoped to adapt the YWJ approach to a 
similar-looking problem involving the aggregation estimators 
 described by \citet{rigollet2012sparse}. 
For this problem the observed~$Y$ is modeled as a sparse linear combination~$Xb$ plus a noise vector $\epsilon$. The estimator for the mean is taken as a convex combination $\sum_S \pi(S)\Phi_SY$ of least squares estimators, where~$\Phi_S$ denotes the matrix for orthogonal projection onto the subspace of $\mathbb{R}^n$ spanned by the columns of the $n\times|S|$ submatrix $X_S=(X_j:j\in S)$ 
of~$X$. Let $\|\cdot\|$ denote the Euclidean norm. The vector $\pi$ is defined to be of the form
\begin{equation}\label{pi}
\pi(S)\propto\mu(S)\exp\left(-\frac{\left\|(I-\Phi_S)Y\right\|^2+2\text{trace}(\Phi_S)}{\beta}\right),
\end{equation}
for some suitably chosen weight function~$\mu$ on~$\statespace$.
For $\beta=2$, the vector $\pi$ can be interpreted as a posterior distribution on the set of least squares projections $\{\Phi_SY\}_{S\subset [p]}$. 

Unfortunately we encountered some  technical difficulties in modifying the YWJ path construction. 
The YWJ chains ran on a state space whose elements they identified with subsets of columns of~$X$. More precisely, they took the prior distribution to concentrate on vectors~$b\in\mathbb{R}^p$ whose support $\{j\in [p]:b_j\neq 0\}$ belonged to a set
\begin{equation*}
\ssYWJ=\{S\in \{0,1\}^p:|S|\leq s_0\},
\end{equation*}
 for  some (suitably small)~$s_0$ depending on~$s^*$.

Here  we follow YWJ in identifying a subset of~$[p]$ with its indicator function, as an element of~$\{0,1\}^p$. The 
size~$|S|$ of a set~$S$ is equal to the number of ones in its indicator function. We also write~$h(S,S')$ for the Hamming distance between two sets, which coincides with the~$\ell^1$ distance between their indicator functions.

The restriction to small sets of columns was natural for YWJ, given the assumption of a sparse~$\theta$.
 However it had some unfortunate complicating effects on construction of the Markov chain and the paths that determine the mixing rate. The main difficulties arose for sets~$S$ on the ``boundary" of~$\ssYWJ$ as a subset of $\{0,1\}^p$, that is, the sets in~$\ssYWJ$ of size $s_0$. To keep the chain within~$\ssYWJ$ they had to invent a delicate 'single-flip$\backslash$double-flip' construction for their proposal chain. As far as we can tell, these double-flips would lead to a major slowdown for the analogous aggregation chain.

%

In this note we describe a modification of the YWJ approach that eliminates the difficulties caused by the boundary. We too analyse behavior under a fixed~$\PPtheta$, for~$\theta$ a vector with a sparse support set~$T$. In addition, we assume existence
of an estimator~$\widehat{T}$ that has high 
$\PPtheta$-probability of getting close to~$T$, in an appropriate sense.
As shown in Section~\ref{init}, the thresholded lasso estimator of~\cite{zhou2010thresholded} provides a suitable~$\That$.
Our M-H chain has state space~$\statespace=\{0,1\}^p$.
We use~$\That$ as the starting state. Instead of the hard boundary
for~$\ssYWJ$ we use a `soft boundary': when our proposal chain gets out to non-sparse regions of~$\statespace$ we allow jumps back to~$\That$ with probability~$1/2$. This choice prevents the chain from spending too much time exploring unpromising parts of the state space. We no longer need the double-flips for our proposal chain.
As shown by our Theorem~\ref{thm:main} in
Section~\ref{sec:theorem}, these choices lead to $\epsilon$-mixing times of order~$s^{*2}p\log p$, which is  faster than the rate achievable with the hard boundary.

A reanalysis of the YWJ problem using our soft-thresholding method  also improves on their mixing times. We omit such analysis
from this note and  refer interested readers to the thesis of
\citet{DanaPhD}, which contains a more detailed comparison of the 
methods.

%
%

\section{Mixing times and the path method}\label{mixing}
Suppose $\pi$ is a stationary distribution for the transition matrix~$P$ of a Markov chain on a finite statespace~$\statespace$. Suppose also that the chain is time reversible:
for each pair of states~$S$ and~$S'$ in~$\statespace$,
\begin{equation*}
Q\{S,S'\}:=
\pi(S)P(S,S') = \pi(S')P(S',S).
\end{equation*}
Equivalently, $P$ corresponds to a random walk on a graph with 
vertices~$\statespace$  and edge weights $Q(\mathfrak{e})$ for $\mathfrak{e}=\{S,S' \}$ such that 
\begin{equation*}
\pi(S)= \SUM_{S'\in\statespace}Q\{S,S'\}\qquad\text{and}\qquad P(S,S')=Q\{S,S'\}/\pi(S).
\end{equation*}

For the M-H chains with~$P$ defined as 
in~(\ref{M-H}), the edge weight becomes
\begin{equation*}
Q\{S,S'\} = \min\left\{\pi(S)R(S,S'),\pi(S')R(S',S) \right\}.
\end{equation*}

Provided $P$ is irreducible and aperiodic, it has eigenvalues $1=\lambda_1> \lambda_2 \geq \dots \geq \lambda_N > -1$ where $N$ is the cardinality of the state space~$\statespace$. \citet[Proposition~3]{diaconis1991geometric} proved that, for such a $P$-chain started in state $S_0$, the $k$-step transition probabilities satisfy
\begin{equation*}
2\left\|P^k(S_0,\cdot) - \pi(\cdot)\right\|_{\rm TV} 
:= \sum_{S\in\statespace}|P^k(S_0,S)-\pi(S)|
\leq \pi(S_0)^{-1/2}\beta^k
\end{equation*}
where $\beta := \min(|\lambda_2|,|\lambda_N|)$.


The analysis is easier if one runs the `lazy' version of the chain,
with transition matrix $\widetilde{P}= (I+P)/2$, which has 
eigenvalues~$\tlam_i=(1+\lambda_i)/2$ for which $1 =\tlam_1 >\tlam_2 \ge \dots \ge \tlam_N\ge0$.
 The corresponding $\beta$ equals $(1+\lambda_2)/2\leq e^{-(1-\lambda_2)/2}$,
so that
\begin{equation*}
\left\|\widetilde{P}^k(S_0,\cdot) - \pi(\cdot)\right\|_{\rm TV} \leq \tfrac{1}{2}\pi(S_0)^{-1/2} e^{-k(1-\lambda_2)/2}.
\end{equation*}
The quantity $1-\lambda_2$ is called the spectral gap for the matrix $P$, which we denote by $\textsc{gap}(P)$. It is traditional to invert the last bound to see that $\left\|\widetilde{P}^k(S_0,\cdot) - \pi(\cdot)\right\|_{\rm TV}\leq \epsilon$ when $k\geq \tau_\epsilon(S_0)$ with
\begin{equation}\label{invert}
\tau_\epsilon(S_0) \leq \frac{2\log(1/2\epsilon)+\log(1/\pi(S_0))}{\textsc{gap}(P)}.
\end{equation}

For both the YWJ problem and the aggregation problem the challenge is to design chains for which $\textsc{gap}(P)$, does not decrease too rapidly to zero. 

The path method provides a lower bound for $\textsc{gap}(P)$. The method requires construction of a set of directed paths connecting different states, one path for each pair $(I,F)$ with $I\neq F$. The path $\gamma(I,F)$ connecting~$I$ and~$F$ should consist of distinct elements $S_0=I, S_1,\dots,S_m=F$ of the state space with edge weights $Q\{S_j,S_{j+1}\}>0$ for each $j$. The path can also be thought of as a sequence of directed edges, $(S_j,S_{j+1})$ for~$j=0,\dots,m$. The path has length $\textsc{len}(\gamma(I,F))=m$. The loading of a directed edge~$\mathfrak{e}$ is defined as
\begin{equation}\label{loading}
\rho(\mathfrak{e}) = \sum_{\gamma(I,F)\ni\mathfrak{e}}\pi(I)\pi(F)/Q(\mathfrak{e}),
\end{equation}
where the sum runs over all paths $\gamma$ with $\mathfrak{e}$ as one of their directed edges.

When the path $\gamma(I,F)$ is just the reverse of the path~$\gamma(F,I)$, as it was for YWJ and will be for us, the distinction between directed and undirected edges becomes less important.

\citet[Corollary 6]{sinclair1992improved} showed that
\begin{equation}\label{Sinclair}
1/\textsc{gap}(P) \leq \left( \max_{S,S'}\textsc{len}((\gamma(S,S'))\right) \times
\left( \max_\mathfrak{e} \rho(\mathfrak{e})\right).
\end{equation}

It is important to note that the paths are a theoretical construct that can depend on information about a Markov chain not  known to the MCMC practitioner. For example, the paths defined by YWJ were allowed to depend on the unknown mean $X\theta$ for the $N(X\theta,\sigma^2 I_n)$ distribution that generated~$Y$. Indeed they designed paths that involved knowledge of the support set $T=\{j: \theta_j\neq 0\}$.

%
\section{Our Metropolis-Hastings chain}\label{sec:chain}
We construct the transition matrix~$P$ as in in~(\ref{M-H}), for a chain with state space~$\statespace=\{0,1\}^p$,  where~$\pi$  
is constructed as  in~(\ref{pi}) from a weight function~$\mu$ inspired
by
the work of~\citet*{castillo2015bayesian} and~\citet*{gao2015general} on posterior contraction in the setting of high dimensional linear regression. For 
positive constants~$D$ and~$\beta$ (that need to be specified),
we define
\begin{equation*}
\mu(S)=\exp\left(-D|S|\log p-\frac{4n}{\beta}\bone\{|S|>4s^*\}\right)
\end{equation*}
so that
\begin{equation*}
\pi(S) = \exp\left( G(S,Y)-m(S)\right)/\mathcal{Z}(Y)
\end{equation*}
where
\begin{align*}
 G(S,y)&= \left\|\Phi_S Y\right\|^2/\beta
 \\
m(S) &= D |S|\log p + 2\text{trace}(\Phi_S)/\beta
+ (4n/\beta)\bone\{|S|>4s^*\}.
\end{align*}
The normalizing constant~$\mathcal{Z}(Y)$, which ensures that~$\sum_{S\in\statespace}\pi(S)=1$, is a complicated sum.
The indicator function ensures that states~$S$ with size greater than~$4s^*$ have very small~$\pi$ measure.

Write~$\nn(S)$ for the set of all~$p$ states~$S'$ at Hamming distance~$1$ from~$S$. We refer to a move from~$S$ to a state chosen uniformly at random from~$\nn(S)$ as a single flip.
Our proposal chain, with transition matrix~$R$, is constructed from a mixture
of single flips  and big jumps to a state $\widehat{T}$, which we will eventually assume has size at most $2s^*$
(with high $\PPtheta$ probability). Define~$\CORE=\{S\in\statespace: |S|\le 3s^*\}$ and
$\statespace_k = \{S\in\statespace: |S|=k\}$.
 Our proposal chain allows these moves:
\begin{enumerate}[(R1)]
\item 
If $S\in\CORE\backslash\{\widehat{T}\}$ then move via a single flip.
\item
If $S\in\CORE^c$ then, with probability $1/2$ move to $\widehat{T}$ and with probability $1/2$ move via a single flip.
 
\item
For a move from $\That$, with probability $1/2$ move via a single flip  and with probability $1/2$ first choose uniformly at random an integer $k$ with $3s^*< k \leq p$, then jump to an $S'$ chosen uniformly at random from~$\statespace_k$.
\end{enumerate}

It will be important to have $\pi(\That)$ not too small---the choice $\That=\emptyset$ does not work for our approach. The steps of the chain involving $\That$ are much easier to handle than the double flips of the YWJ method.


\section{A lower bound on the spectral gap}\label{sec:theorem}
With notation as in Section~\ref{sec:chain}, define events:
\begin{align*}
\mathcal{A}_n&=\left\{|\widehat{T}|\leq 2s^*,\left\|(I-\Phi_{\widehat{T}})X\theta\right\|^2\leq cs^*\log p\right\}
\\
\mathcal{E}_n &=\left\{\max_{|S|<6s^*,j\notin S}\left|\left\langle(I-\Phi_S)X_j,\epsilon\right\rangle\right|^2\leq nL\nu\log p\right\}
\\
\mathcal{F}_n &= \{\|\epsilon\|^2\leq 2n\}
\\
\hh_n &=\aa_n\cap\ee_n\cap\ff_n .
\end{align*}
Here $c$ and $L$ are constants that are to be chosen and~$\nu$ is a constant that will appear in the statement of Theorem~\ref{thm:main}.

Our main theorem says something interesting only if the event~$\hh_n$ has high~$\PPtheta$ probability, which is true under reasonable assumptions:
\begin{enumerate}[(E1)]
\item 
See Section~\ref{init} for assumptions that ensure $\mathcal{A}_n$ occurs with high probability. 

\item
If~$\epsilon\sim N(0,I_n)$  then~$\|\epsilon\|^2\sim\chi_n^2$.
A very neat argument of \citet*[page~29]{BLM2013Concentration}
gives
$
\PP\{\|\epsilon\| \ge \sqrt n + \sqrt t \}\le e^{-t}$
for~$t\ge0$.
Thus $\PP\ff_n^c\le e^{-0.17 n}$.

\item
YWJ assumed that 
\begin{equation*}
\mathbb{E}_\theta\max_{|S|\leq s_0,j\notin S}\left|\left\langle(I-\Phi_S)X_j,\epsilon\right\rangle\right|\leq \sqrt{nL\nu\log p}/2, 
\end{equation*}
which ensures that  $\PPtheta\ee_n^c\le\exp(-L\nu\log p/8)$. 
\end{enumerate}

Our readers might prefer to add these as explicit assumptions to the next theorem, in which case the desired properties would be asserted to hold except on a set with impressively small probability.

\begin{theorem}\label{thm:main}
Assume
\begin{enumerate}[(i)]

\item\label{assump:design}
Each column of the design matrix~$X$ has ($\ell^2$) 
length~$\sqrt{n}$ ands there exists a constant $\nu>0$ such that
\begin{equation*}
\left\|X_S w\right\|^2\geq n\nu \|w\|^2\qquad\text{for each } w\in\mathbb{R}^S\text{ and }S\in \statespace\text{ with }|S|\leq 6s^*.
\end{equation*}

\item\label{assump:betamin}
$\min_{j\in T}|\theta_j|^2\geq \theta_{\min}^2
\geq \left(8\beta D \log p\right)/(n\nu^2)$.

\end{enumerate}
If 
\begin{equation}\label{D.choice}
D \ge 4 + (4L+2c)/\beta
\end{equation}
then  we have
$
1/\textsc{gap}(P)\leq 60ps^*$ on the set $\hh_n$.
\end{theorem}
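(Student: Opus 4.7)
The plan is to apply Sinclair's path bound~(\ref{Sinclair}) using canonical paths that route through~$\That$ as a hub. For each ordered pair $(I,F)$ with $I\ne F$, I would define $\gamma(I,F)$ by concatenating a sub-path $I\to\That$ with a sub-path $\That\to F$. When $I\in\CORE$, the sub-path $I\to\That$ consists of single flips in a fixed canonical order: first remove the elements of $I\setminus\That$ one at a time (in some pre-chosen enumeration of~$[p]$), then add the elements of $\That\setminus I$ in the same enumeration. When $I\in\CORE^c$, it is the single direct-jump edge $(I,\That)$ supplied by rule~(R2). The other half is symmetric. Since $|\That|\le 2s^*$ on $\aa_n$ and $|I|\le 3s^*$ for $I\in\CORE$, each half-path has length at most~$5s^*$, giving $\textsc{len}(\gamma(I,F))\le 10s^*$.

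For the loadings I would split edges into two classes. A direct-jump edge $\mathfrak{e}=\{S,\That\}$ with $S\in\CORE^c$ is traversed only by the paths $\gamma(S,F)$ and $\gamma(I,S)$, so the numerator in~(\ref{loading}) is at most $2\pi(S)$. Because $R(S,\That)=1/2$, we have $Q(\mathfrak{e})\ge\pi(S)/2$ whenever that term is the binding one in the min, yielding $\rho(\mathfrak{e})\le 4$. In the complementary case one needs $\binom{p}{|S|}\pi(S)/\pi(\That)=O(1)$, which the $(4n/\beta)\bone\{|S|>4s^*\}$ penalty handles for $|S|>4s^*$ and the $D|S|\log p$ penalty handles for $3s^*<|S|\le 4s^*$ once the choice~(\ref{D.choice}) is in force.

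The harder case is a single-flip edge $\mathfrak{e}=\{S,S'\}$ in $\CORE$ with $|S'|=|S|+1$. For these I would combine the single-flip bound $Q(\mathfrak{e})\ge\min(\pi(S),\pi(S'))/(2p)$ with
\begin{equation*}
\rho(\mathfrak{e})\le\frac{1}{Q(\mathfrak{e})}\SUM_{I:\,\gamma(I,\That)\ni\mathfrak{e}}\pi(I)\;+\;\frac{1}{Q(\mathfrak{e})}\SUM_{F:\,\gamma(\That,F)\ni\mathfrak{e}}\pi(F).
\end{equation*}
The canonical ordering forces any contributing $I$ to have $I\cap\That$ uniquely determined by $\mathfrak{e}$, while $I\setminus\That$ ranges over subsets of $[p]\setminus\That$ of size at most $3s^*-|I\cap\That|$. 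The main technical task is then to show that the weighted sum of $\pi(I)$ is bounded by a constant times $\min(\pi(S),\pi(S'))$ on $\hh_n$. This reduces to two comparisons: when $\mathfrak{e}$ toggles a column outside $T$, the penalty $D|I|\log p$ must dominate the quadratic gain $\|\Phi_I Y\|^2-\|\Phi_S Y\|^2$, controlled via $\ee_n$ for $|\langle(I-\Phi_S)X_j,\epsilon\rangle|^2$, $\ff_n$ for $\|\epsilon\|^2$, $\aa_n$ for $\|(I-\Phi_{\That})X\theta\|^2$, and~(\ref{D.choice}); when $\mathfrak{e}$ toggles a column of $T$, the $\beta$-min assumption~(\ref{assump:betamin}) and the restricted eigenvalue assumption~(\ref{assump:design}) force the signal loss in $\|\Phi_S Y\|^2$ to be much larger than $D\log p$, so the relevant $\pi$-ratio is exponentially small.

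Combining the two classes gives $\rho_{\max}\le 6p$, and together with $\textsc{len}_{\max}\le 10s^*$ this yields $1/\textsc{gap}(P)\le 60ps^*$ via~(\ref{Sinclair}). I expect the main obstacle to be the bookkeeping in the single-flip loading step: one has to coordinate the constants $c,L,D,\nu$ so that the net penalty is strictly positive for every off-$\That$ configuration along the canonical path, while simultaneously keeping $\pi(\That)$ large enough that the hub does not become a bottleneck.
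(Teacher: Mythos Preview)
Your hub-and-spoke construction has a genuine gap: $\That$ is not, in general, a state of near-maximal $\pi$-mass, and routing every path through it creates a bottleneck that no choice of constants can repair. Concretely, suppose $\That=T\cup\{j_m\}$ for some $j_m\notin T$; this satisfies $\aa_n$ (indeed $\|(I-\Phi_{\That})X\theta\|=0$) and nothing in the hypotheses excludes it. Your sub-path from $I=T$ to $\That$ is then the single edge $\mathfrak{e}=(T,\That)$, and every $\gamma(T,F)$ traverses it. Since $R(\That,T)=1/(2p)$ and $\pi(\That)<\pi(T)$, one gets $Q(\mathfrak{e})=\pi(\That)/(2p)$; but inequality~\eqref{eq:S.ggS.U} applied to $S=\That\in\statespace_T\cap\UU$ gives $\pi(T)/\pi(\That)\ge p^{D/2}$. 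Hence $\rho(\mathfrak{e})\ge\pi(T)(1-\pi(T))/Q(\mathfrak{e})\gtrsim p^{1+D/2}$, far exceeding the $6p$ you claim. More generally, whenever $\That\setminus T\ne\emptyset$ the phase-2 additions of spurious columns push $\pi$ down toward $\pi(\That)$, and whenever $T\not\subseteq\That$ your phase-1 removals delete columns of $T$, so $\pi$ drops sharply along the path. In the latter case the ``exponentially small $\pi$-ratio'' you invoke points the wrong way: it shrinks $Q(\mathfrak{e})$, not the numerator of~\eqref{loading}.

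The paper's remedy is to route through $T$ rather than $\That$. It defines a tree rooted at $T$ via a map $\gmap$: for small $S\supseteq T$ delete some $i\in S\setminus T$; for small $S\not\supseteq T$ add the $i\in T\setminus S$ that \emph{maximizes} $\|\Phi_{S\cup\{i\}}X\theta\|$; for large $S$ jump to $\That$. The greedy choice is essential---with a fixed enumeration the one-step signal gain from a particular $j\in T\setminus S$ can be far below $n\nu^2\theta_{\min}^2$, whereas the maximizer inherits the lower bound by comparison with the average (see the proof of~\eqref{eq:S.ggS.U}). The resulting monotonicity $\pi(\gmap(S))\ge p^{D/2}\pi(S)$ on $\UU$ is precisely what yields the subtree estimate $\pi(\Lambda(S))\le 3\pi(S)$ of Lemma~\ref{pi.gmap} and hence the loading bound~\eqref{max.load}.
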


YWJ assumption B required $\max_{|S|\leq s_0}\lambda_{\min}(X_S^TX_S/n)\geq\nu$, which is similar to our assumption~(\ref{assump:design}) except that they had the much larger $s_0$ in place of our $6s^*$ (see YWJ assumption D). 
YWJ
needed the larger $s_0$ value to accommodate their double flips. We are able to weaken their assumption by avoiding the difficulty around the boundary of the state space.

The proof of the Theorem will use the path method described in Section~\ref{mixing}.
We assume throughout the following argument that the sample puts us in~$\hh_n$. In particular, we assume~$|\That|\le 2s^*$.
We follow the idea of YWJ in constructing paths by means of a a map $\gmap:\statespace\backslash \{T\}\to \statespace$, but with a slightly different choice for $\gmap$. Our choice avoids the difficulties with the hard boundary.

Here is our construction for~$\gmap$.
Define 
\begin{align*}
\UU &:=\{S\in\statespace\backslash \{T\}: |S\backslash T|\le 3s^*\} 
\text{\quad and\quad}
\statespace_T := \{S\in\statespace\backslash \{T\}: S\supseteq T\}.
\end{align*}
Notice that
$
\CORE \subset \UU \subset \{S\in\statespace: |S|\le 4s^*\}$. 

\begin{figure}[h]
\label{fig:Gpaths}
\includegraphics[width = 3.5 in]{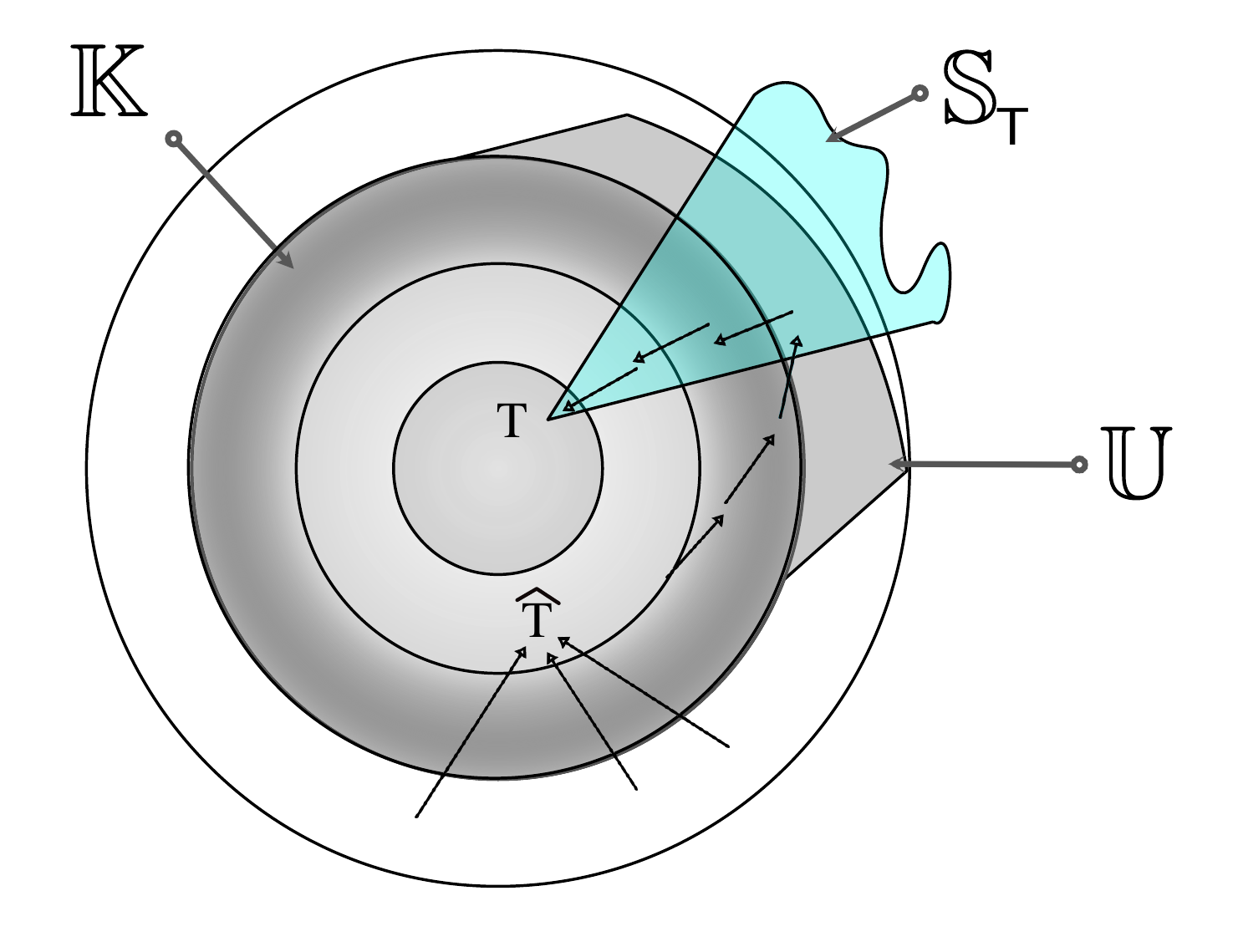}
\caption{A visualization of the subsets of the state space and the map $\gmap$. The empty set lies on the center of the rings and the size of a set is represented by its distance to the center. The four rings stand for sets of size $s^*$, $2s^*$, $3s^*$ and $4s^*$ from inside to outside. The arrows show the direction of the map $\gmap$.}
\end{figure}


\begin{enumerate}[($\gmap$1)]
\item 
If $S\in\statespace_T\cap\UU$ define $\gmap(S)=S\backslash\{i\}$ for an arbitrary~$i$ in~$S\backslash T$. (The choice of the particular~$i$ is not important but, for definiteness,  we could take it as the smallest~$j$ in~$S\backslash T$.)

\item
If $S\in\UU\backslash\statespace_T$
define~$\gmap(S)=S\cup\{i\}$ for the~$i$ in $T\backslash S$  
that gives the largest value for~$\left\|\Phi_{\gmap(S)} X\theta\right\|$.

\item
If $S\in\UU^c$ define $\mathcal{G}(S)=\widehat{T}$.

\end{enumerate}

The map~$\gmap$ defines a directed graph on~$\statespace$, with edges of the form~$(S,\gmap(S))$.
The choice of~$\UU$ ensures that $\gmap(S)\in\UU$ if~$S\in\UU\backslash\{T\}$.
Consequently, the $\gmap$-path from~$S$ to~$T$ stays inside~$\UU$ if~$S\in\UU\backslash\{T\}$.
It also ensures that~$\gmap$ decreases the Hamming distance to the true support,
\begin{equation*}
h(\gmap(S), T)<h(S,T)
\qt{for all $S\in\UU\backslash\{T\}$,
}
\end{equation*}
a property that implies the graph has no cycles. From each state, finitely many compositions of $\gmap$ eventually leads to $T$. That is, as for the YWJ construction, our map $\gmap$  defines a directed tree on $\statespace$ with $T$ as the root. See Figure~\ref{fig:Gpaths} for a visualization of the relationship between $\UU, \statespace_T$, $\CORE$ and the map $\gmap$.

For distinct states~$I$ and~$F$ we define~$\gamma(I,F)$ as the shortest path from~$I$ to~$F$ along the tree  (ignoring the $\gmap$-direction of the edges).
We write $d_\gmap(I,F)$ for the length of the~$\gamma(I,J)$ path, 
which is a metric on~$\statespace$.
Also following YWJ, we 
write
$\Lambda(S)$ for the set of all $S'$, including~$S$ itself, for which the $\gmap$-path from $S'$ to $T$ passes through~$S$.

Most of the hard work for the proof of Theorem~\ref{thm:main} occurs in deriving a bound for the loadings. The necessary facts are stated in the next Lemma, whose proof appears in 
Section~\ref{proofLemma}.

\begin{lemma}\label{pi.gmap}
On the event~$\hh_n$, under the assumptions of  Theorem~\ref{thm:main} the following inequalities hold for every~$S$ in~$\statespace\backslash\{T\}$.
\begin{enumerate}[(i)]
\item\label{PSggS}
$P(S,\gmap(S))\ge 1/(2p)$
\item\label{piLam}
$\pi(\Lambda(S))\le 3\pi(S)$
\end{enumerate}
\end{lemma}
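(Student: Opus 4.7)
The plan is to prove both assertions by splitting $S$ according to which branch of the definition of $\gmap$ applies, with an extra treatment reserved for $S=\That$ in part~(\ref{piLam}).

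For the two single-flip branches $(\gmap1)$ and $(\gmap2)$, I would prove the stronger statement $\pi(\gmap(S)) \geq \pi(S)$; part~(\ref{PSggS}) then follows because $R(S, \gmap(S))$ and $R(\gmap(S), S)$ are both at least $1/(2p)$ by direct inspection of the proposal chain, so the Metropolis acceptance cannot push $P(S, \gmap(S))$ below $1/(2p)$. In branch $(\gmap1)$, where $\gmap(S) = S \setminus\{i\}$ with $T \subseteq S$, the signal $X_T\theta_T$ already lies in the range of $\Phi_{\gmap(S)}$, so $\|\Phi_S Y\|^2 - \|\Phi_{\gmap(S)}Y\|^2$ reduces to a single rank-one noise term $\langle v, \epsilon\rangle^2$ with $v$ the normalized residual of $X_i$. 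Bounding this by $L\log p$ using $\ee_n$ together with the lower bound $\|(I-\Phi_{\gmap(S)})X_i\|^2 \geq n\nu$ coming from assumption~(\ref{assump:design}), I can balance it against the $D\log p$ drop in $m$ via~(\ref{D.choice}). In branch $(\gmap2)$, I would first observe that $T\not\subseteq S$ forces $|S\cap T| \leq |T|-1$, whence $|\gmap(S)|\leq 4s^*$ and no indicator change occurs. A Cauchy--Schwarz argument applied to the identity $(I-\Phi_S)X_T\theta_T = \sum_{j\in T\setminus S}\theta_j(I-\Phi_S)X_j$ delivers $\max_i\langle v_i, X\theta\rangle^2 \geq n\nu^2\theta_{\min}^2 \geq 8\beta D\log p$ thanks to the $\beta$-min condition~(\ref{assump:betamin}); the resulting signal gain in $G$ swamps both the added $D\log p$ penalty and the $O(L\log p)$ noise contribution from $\ee_n$.

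The delicate step is branch $(\gmap3)$, where $\gmap(S) = \That$. Here $R(S, \That) \geq 1/2$ is automatic from (R2), and the task reduces to verifying $\pi(S)/\pi(\That) \leq 2pR(\That, S)$, which after using $R(\That, S)\geq 1/(2(p-3s^*)\binom{p}{|S|})$ becomes $\pi(S)\binom{p}{|S|}\lesssim \pi(\That)$. I would control $\|\Phi_S Y\|^2 - \|\Phi_{\That}Y\|^2$ via the sequential rank-one decomposition of $(\Phi_{S\cup\That} - \Phi_{\That})Y$: the signal contribution is at most $\|(I-\Phi_{\That})X\theta\|^2 \leq cs^*\log p$ by $\aa_n$, while the noise contribution grows as $|S\setminus\That|L\log p$ by $\ee_n$. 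When $|S| > 4s^*$ the $(4n/\beta)\bone\{|S|>4s^*\}$ summand in $m$ dominates easily and $\pi(S)/\pi(\That)\leq p^{-|S|}$ falls out comfortably. The harder sub-case is $3s^* < |S|\leq 4s^*$ with $|S\setminus T| > 3s^*$: the $4n/\beta$ safety net is not active, and I must exploit the twin facts that $|S|-|\That|\geq s^*$ and that the fit-difference bound scales as $s^*\log p$ rather than $n$, then absorb $(G(S,Y) - G(\That,Y))/\beta$ into the $D|S|\log p$ penalty using~(\ref{D.choice}). This sub-case is the main technical obstacle.

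For part~(\ref{piLam}) I would argue by backward induction on depth in the $\gmap$-tree. With $\lambda(S):=\pi(\Lambda(S))/\pi(S)$, the recursion $\lambda(S) = 1 + \sum_{S''\in\gmap^{-1}(S)}(\pi(S'')/\pi(S))\,\lambda(S'')$ needs $\sum_{S''\in\gmap^{-1}(S)}\pi(S'')/\pi(S)$ to be small. For $S\neq\That$, the preimages split into at most $p$ wrong-way additions $S\cup\{i'\}$ with $i'\notin T$, each of $\pi$-ratio $\lesssim p^{-(D-L/\beta)}$ by the branch $(\gmap1)$ estimate, and at most $s^*$ retractions $S\setminus\{i'\}$ with $i'\in T\cap S$, each of $\pi$-ratio $\lesssim p^{-(3D-O(1))}$ by the branch $(\gmap2)$ estimate; the total is well under $1/p$, giving $\lambda(S)\leq 3/2$ by induction. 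When $S=\That$ the whole set $\UU^c$ lies in $\gmap^{-1}(\That)$, but $\pi(\UU^c)\leq 2\pi(\That)$ follows from the case-$(\gmap3)$ bounds above, and the full recursion yields $\lambda(\That)\leq 3$. The two parts of the lemma thus share a common core difficulty: cleanly controlling $\pi(\UU^c)/\pi(\That)$ in the regime $|S|\leq 4s^*$.
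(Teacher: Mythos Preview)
Your plan is sound and, for part~(\ref{PSggS}), parallels the paper closely: the paper also proves $\pi(\gmap(S))\ge\pi(S)$ in branches $(\gmap1)$ and $(\gmap2)$ via the calculations you sketch, and for $(\gmap3)$ obtains $\log(\pi(S)/\pi(\That))\le -2|S|\log p$ uniformly over $\UU^c$ from a single inequality~\eqref{S.That.stronger}, without singling out $3s^*<|S|\le 4s^*$ as delicate. One correction: for $|S\cup\That|\ge 6s^*$ the event $\ee_n$ no longer controls the rank-one noise increments, so your bound ``$|S\setminus\That|L\log p$'' is unavailable; there the paper switches to the crude $\|(\Phi_{S\cup\That}-\Phi_{\That})\epsilon\|^2\le\|\epsilon\|^2\le 2n$ from $\ff_n$ and observes that the $4n/\beta$ indicator in $m(S)$ exactly cancels the resulting $4n/\beta$ in the fit bound. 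Once you write this out the two sub-cases of $(\gmap3)$ collapse into one line, and your ``main technical obstacle'' largely evaporates.

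For part~(\ref{piLam}) you take a genuinely different route. The paper does not induct: it bounds $\pi(\Lambda(S)\cap\UU)/\pi(S)$ directly by grouping $S'\in\Lambda(S)\cap\UU$ according to hop-distance $k$ from $S$, noting there are at most $(ep/k)^k$ such states each with $\pi(S')/\pi(S)\le p^{-kD/2}$; the piece $\Lambda(S)\setminus\UU$ is dispatched in one stroke via $\pi(\UU^c)\le\pi(\That)\le\pi(S)$ whenever $\That\in\Lambda(S)$. Your recursive scheme is a valid alternative, but as written it has a small circularity: you claim $\lambda(S)\le 3/2$ for $S\ne\That$ ``by induction'', yet when the induction reaches $S=\gmap(\That)$ one of its preimages is $\That$ itself, for which you have only $\lambda(\That)\le 3$. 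The repair is painless---run the induction with the uniform hypothesis $\lambda\le 3$, since every preimage-ratio sum within $\UU$ is $O(1/p)$ and the extra $\UU^c$ contribution at $\That$ is at most $1$---but the two-tier $3/2$ versus $3$ statement does not go through exactly as you have organized it. The paper's hop-count avoids this bookkeeping altogether.
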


The rest is relatively easy.

\begin{proof}[Proof of Theorem~\ref{thm:main}]
We assert that
\begin{align}
 &\max\nolimits_\mathfrak{e} \rho(\mathfrak{e}) \le 6p
\label{max.load}
\\
&\max\nolimits_{S,S'}\textsc{len}((\gamma(S,S'))\le 2+8s^*\leq 10s^*
\label{max.len}
\end{align}
from which it follows that
$
1/\textsc{gap}(P)\leq 60ps^*
$.

\subheading{The maximum load: proof of~\eqref{max.load}}
Suppose $I$ and~$F$ are distinct states. Let~$M$ denote the first state at which the~$\gmap$-path from~$I$ to~$T$ meets
the~$\gmap$-path from~$F$ to~$T$. The path~$\gamma(I,F)$ 
then consists of two segments: 
the~$\gmap$-path from~$I$ to~$M$ followed by the reverse of 
the~$\gmap$-path from~$F$ to~$M$.

Suppose $\mathfrak{e}=(S,\gmap(S))$ is a directed edge that appears
in~$\gamma(I,F)$. The state~$S$ must then lie on
the~$\gmap$-path from~$I$ to~$M$. It follows that
$I\in\Lambda(S)$ and $F\notin \Lambda(S)$. 
For each path~$\gamma(I,F)$ that contributes to the loading~$\rho(\mathfrak{e})$ the pair~$(I,F)$ must belong to $\Lambda(S)\times\Lambda(S)^c$. Thus
\begin{equation*}\label{eq:rho.rewrite}
\rho(\mathfrak{e})\le  \frac{
\sum_{I\in\Lambda(S),F\in\Lambda(S)^c}
\pi(I)\pi(F)}{Q(\mathfrak{e})}
=\frac{\pi(\Lambda(S))(1-\pi(\Lambda(S)))}{\pi(S)P(S,S')},
\end{equation*}
which is less than $6p$ by Lemma~\ref{pi.gmap}.

If $\mathfrak{e}=(\gmap(S),S)$ a similar argument, with  the roles of~$I$ and~$F$ interchanged, leads to the same upper bound.
 
\subheading{The maximum length: proof of~\eqref{max.len}}
First we bound the $d_\gmap$-distance from a generic~$S$ to~$T$. We consider a number of cases.

\begin{enumerate}[(a)]

\item \label{BST}
Claim: if $S\in\UU\cap\statespaceT$ then $d_\gmap(S,T) 
\le |S|-s^*\le 3s^*$.
Reason: It takes at most~\hbox{$|S|-s^*$} steps of type~$\gmap1$ to reduce~$S$ to~$T$.

\item \label{ThatT}
Claim: $d_\gmap(\That,T) \le 3s^*$. 
Reason: We know $|\That|\le 2s^*$. It takes at most~$s^*$ steps of type~$\gmap2$ to expand~$\That$ to~$\That\cup T$; and $d_\gmap(\That\cup T,T)\le 2s^*$ by case~(\ref{BST}).

\item \label{UUc}
Claim: if $S\in\UU^c$ then $d_\gmap(S,T)\le 1+3s^*$.
Reason: We have $d_\gmap(S,\That)=1$ because $\gmap(S)=\That$. 
The rest of the path from~$\That$ to~$T$ takes at most~$3s^*$ steps.

\item\label{UUnotST}
Claim: if~$S\in\UU\backslash \statespaceT$
then $d_\gmap(S,T)\le 4s^*$.
Reason: We build up $S$ to~$S\cup T\in\UU\cap\statespaceT$ by  at most~$s^*$ steps of type~$\gmap2$, then reduce to~$T$ as in case~(\ref{BST}).
\end{enumerate}
It follows, for each pair of distinct states~$I$ and~$F$, that
\begin{equation*}
d_\gmap(I,F) \le d_\gmap(I,T)+ d_\gmap(T,F) \le 2(1+4s^*).
\end{equation*}

%
%
%
%
%
%
\end{proof}

\section{Proofs of technical lemmas}\label{proofLemma}
Throughout the Section we assume implicitly that the event~$\hh_n$ has occurred.

Several of the arguments rely on the following  simple consequence of assumption~(\ref{assump:design}) from Theorem~\ref{thm:main}. 
To simplify notation we write $A+B$ instead of~$A\cup B$ if ~$A$ and~$B$ are disjoint sets. Also, if~$j\in A^c$ and~$i\in A$ then we 
abbreviate~$A\cup\{j\}$ to~$A+j$ and $A\backslash\{i\}$ to~$A-i$.

\begin{lemma}\label{project}
 If $A$ and $B$ are disjoint subsets of $[p]$ with $|A|+|B|\leq 6s^*$ then 
\begin{enumerate}[(i)]
 \item\label{sing}
 $\left\|(I-\Phi_A)X_Bt\right\|\geq \sqrt{n\nu}\|t\|$ for each~$t$ in~$\mathbb{R}^B$
 
\item\label{eigen}
$\left\|X_B^T(I-\Phi_A)X_Bt\right\|\geq n\nu\|t\|$ for each~$t$ in~$\mathbb{R}^B$.

\item\label{one-step}
If $k\in A^c$  then $\Psi_{k; A}:=\Phi_{A+k}-\Phi_A=\|z_k\|^{-2}z_kz_k^T$ projects
vectors orthogonally onto the part of~$\text{span}(X_{A+k})$ that
is orthogonal to~$\text{span}(X_{A})$, the
one-dimensional subspace spanned by~$z_k :=(I-\Phi_A)X_k$.
For each~$w$ in~$\RR^n$,
\begin{equation*}
\left\|\Phi_{A+k}w\right\|^2-\left\|\Phi_A w\right\|^2
=\left\|\Psi_{k;A} w\right\|^2
\le \left\langle(I_n-\Phi_A)X_k, w\right\rangle^2/(n\nu).
\end{equation*}

\item\label{A+B.A}
If $B=\{k[1],\dots,k[m]\}$ and $A_j = A+\{k[\ell]: 1\le \ell\le j\}$ for $j=0,\dots,m$ then
\begin{equation*}
\left\|\Phi_{A+B}w\right\|^2-\left\|\Phi_A w\right\|^2
\le \sum_{0\le \ell < m}
\left\langle(I_n-\Phi_{A_{\ell}})X_{k[\ell+1]}, w\right\rangle^2/(n\nu).
\end{equation*}
\end{enumerate}
\end{lemma}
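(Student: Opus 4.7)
The plan is to prove the four parts in order, with each subsequent part building on the previous one, so that the only real input from outside is assumption~(\ref{assump:design}) of Theorem~\ref{thm:main}.

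For part~(\ref{sing}), the plan is to exploit the disjointness of $A$ and $B$ to turn the assumption about $X_{A+B}$ into a lower bound on $(I-\Phi_A)X_B t$. For any $s\in\mathbb{R}^A$, write $w=(s,t)\in\mathbb{R}^{A+B}$; since $|A|+|B|\le 6s^*$, assumption~(\ref{assump:design}) gives
\begin{equation*}
\|X_A s + X_B t\|^2 = \|X_{A+B}w\|^2 \ge n\nu(\|s\|^2+\|t\|^2)\ge n\nu\|t\|^2.
\end{equation*}
Minimizing the left side over $s$ produces $\|(I-\Phi_A)X_B t\|^2$, since that minimum is attained when $X_A s$ equals $-\Phi_A X_B t$. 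The inequality survives the minimization, which yields~(\ref{sing}).

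For part~(\ref{eigen}), observe that $M:=X_B^T(I-\Phi_A)X_B$ is symmetric and positive semidefinite with $t^T M t=\|(I-\Phi_A)X_B t\|^2\ge n\nu\|t\|^2$ by~(\ref{sing}), so its smallest eigenvalue is at least $n\nu$ and $\|Mt\|\ge n\nu\|t\|$. For part~(\ref{one-step}), the key observation is that $z_k\ne 0$ (apply~(\ref{sing}) with $B=\{k\}$, $t=1$, giving $\|z_k\|^2\ge n\nu$), so $\text{span}(X_{A+k})=\text{span}(X_A)\oplus\text{span}(z_k)$ as an orthogonal direct sum. Orthogonal projection onto an orthogonal direct sum is the sum of the component projections, hence $\Phi_{A+k}=\Phi_A+\Psi_{k;A}$ with $\Psi_{k;A}=z_k z_k^T/\|z_k\|^2$. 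Pythagoras then gives $\|\Phi_{A+k}w\|^2-\|\Phi_A w\|^2=\|\Psi_{k;A}w\|^2=\langle z_k,w\rangle^2/\|z_k\|^2$, and the final inequality follows by the lower bound $\|z_k\|^2\ge n\nu$.

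Part~(\ref{A+B.A}) is then a telescoping argument: writing $\|\Phi_{A+B}w\|^2-\|\Phi_A w\|^2=\sum_{\ell=0}^{m-1}\bigl(\|\Phi_{A_{\ell+1}}w\|^2-\|\Phi_{A_\ell}w\|^2\bigr)$ and applying~(\ref{one-step}) to each summand $A_{\ell+1}=A_\ell+k[\ell+1]$ (whose size is at most $|A|+|B|\le 6s^*$, so the hypothesis of~(\ref{one-step}) remains satisfied) gives the stated bound term by term.

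I do not expect any of these steps to present a real obstacle; the only mild subtlety is the optimization step in~(\ref{sing}) that converts the quadratic form bound on the full matrix $X_{A+B}$ into a bound on the residual $(I-\Phi_A)X_B t$. Everything else is linear algebra of orthogonal projections and a telescoping identity.
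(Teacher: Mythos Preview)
Your proposal is correct and follows essentially the same route as the paper: the optimization/substitution trick for part~(\ref{sing}), the eigenvalue argument for~(\ref{eigen}), the orthogonal direct-sum decomposition and the bound $\|z_k\|^2\ge n\nu$ for~(\ref{one-step}), and the telescoping sum for~(\ref{A+B.A}) all match the paper's own proof. The only cosmetic difference is that the paper phrases~(\ref{sing}) as ``choose $r$ with $X_Ar=-\Phi_A X_Bt$ and drop the $\|r\|^2$ term'' rather than ``minimize over $s$,'' but these are equivalent.
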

\begin{proof}
Assumption~(\ref{assump:design}) of Theorem~\ref{thm:main} and the bound $|A+ B|\le 6s^*$ give 
\begin{equation*}
\left\|X_Ar+X_Bt\right\|^2\geq n\nu(\|r\|^2+\|t\|^2)
\qt{for all $r\in\mathbb{R}^A$ and $t\in\mathbb{R}^B$.}
\end{equation*}
If we choose $r$ so that $X_Ar=-\Phi_AX_Bt$ then ignore the $\|r\|^2$ on the right-hand side we are left with
\begin{equation*}
\left\|(I-\Phi_A)X_Bt\right\|\geq n\nu\|t\|^2\qquad\text{for each }t\text{ in }\mathbb{R}^B,
\end{equation*}
which is equivalent to~(\ref{sing}).
The same inequality also
implies that  the $n\times |B|$ matrix $W=(I-\Phi_A)X_B$ has smallest singular value no less than~$\sqrt{n\nu}$. It follows that
the positive definite matrix~$W^TW=X_B^T(I-\Phi_A)X_B$ has smallest eigenvalue no less than~$n\nu$, which is equivalent to~(\ref{eigen}).

The first equality in~(\ref{one-step}) comes from the decomposition of~$\Phi_{A+k}w$ into the sum of orthogonal components~$\Psi_{k;A}w+\Phi_Aw$. The inequality comes from the lower bound~$\|z_k\|\ge \Sqrt{n\nu}$, which is a special case of~(\ref{sing}) with~$B$ replaced by~$\{k\}$ and~$t=1$.

For inequality~(\ref{A+B.A}) define 
\begin{equation*}
z_{\ell+1}=\left(I-\Phi_{A_{\ell}} \right)X_{k[\ell+1]}
\qt{for $0\le \ell<m$.}
\end{equation*}
Note that $\Phi_{A+B}-\Phi_A$ is a sum of projections onto the 
one-dimensional orthogonal spaces spanned by the~$z_{\ell+1}$'s.
Then invoke~(\ref{one-step}).
\end{proof}

%
%
%
%

The proof of Lemma~\ref{pi.gmap}
requires control of the ratio~$\pi(S)/\pi(S')$ for various pairs~$S,S'$. The necessary facts are contained in the following lemma.  It is here that the main technical differences between the YWJ argument and ours appear.


\begin{lemma}\label{ratio}
Under the assumptions of Theorem~\ref{thm:main}, 
for all~$S\in\statespace$:
\begin{equation}\label{S.That}
\log\frac{\pi (S)}{\pi (\widehat{T})}
 -\tfrac12D|S|\log p + Cs^*\log p
 \qt{where $C= 2D+4/\beta +2c/\beta$}.
\end{equation}
and for each $S$ in $\statespace\backslash\{T\}$:
\begin{numcases}{\log\frac{\pi(S)}{\pi(\mathcal{G} (S))} \leq}
 -\tfrac12 D\log p & if $S\in \UU$
\label{eq:S.ggS.U}
\\
-2|S|\log p& if $S\in\UU^c$.\label{eq:S.ggS.Ucomp}
\end{numcases}
Consequently
\begin{equation}\label{gmap.UU}
\pi(\gmap(S)) \ge \exp\left(\tfrac12 D\log p \right)\pi(S)\ge \pi(S)
\qt{for all $S$ in $\UU$.}
\end{equation}

\end{lemma}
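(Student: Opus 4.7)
The plan is to start from the explicit form $\pi(S)=\mathcal{Z}(Y)^{-1}\exp(G(S,Y)-m(S))$, which yields
\[
\log\frac{\pi(S)}{\pi(S')} = \frac{\|\Phi_S Y\|^2 - \|\Phi_{S'}Y\|^2}{\beta} - D(|S|-|S'|)\log p - \frac{2(|S|-|S'|)}{\beta} - \frac{4n}{\beta}\bigl(\bone\{|S|>4s^*\} - \bone\{|S'|>4s^*\}\bigr),
\]
so every claim reduces to bounding $\|\Phi_S Y\|^2-\|\Phi_{S'}Y\|^2$ by combining $Y=X\theta+\epsilon$ with Lemma~\ref{project} and the events in $\hh_n$.

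For~\eqref{S.That} I would split on $|S|>4s^*$ versus $|S|\leq 4s^*$. In the first regime the crude identity $\|\Phi_SY\|^2-\|\Phi_{\widehat T}Y\|^2\leq\|(I-\Phi_{\widehat T})Y\|^2$, together with $(a+b)^2\leq 2a^2+2b^2$, $\aa_n$, and $\ff_n$, yields at most $2cs^*\log p+4n$, and the resulting $4n/\beta$ is cancelled exactly by the indicator penalty. In the second regime $|S\cup\widehat T|\leq 6s^*$, so $\Pi:=\Phi_{S\cup\widehat T}-\Phi_{\widehat T}$ is a projection with $\|\Pi X\theta\|^2\leq\|(I-\Phi_{\widehat T})X\theta\|^2\leq cs^*\log p$ from $\aa_n$, while $\|\Pi\epsilon\|^2\leq|S|L\log p$ follows from Lemma~\ref{project}(iv) with $w=\epsilon$ and $\ee_n$; the hypothesis~\eqref{D.choice} absorbs the noise term $2|S|L\log p/\beta$ into $-\tfrac12D|S|\log p$ and leaves the advertised $Cs^*\log p$ slack.

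For~\eqref{eq:S.ggS.U} I would split on the branches of $\gmap$. In branch~$(\gmap 1)$, $\gmap(S)=S-i$ with $i\in S\backslash T$ and $T\subseteq S$, so $T\subseteq S-i$ and $(I-\Phi_{S-i})X\theta=0$; Lemma~\ref{project}(iii) combined with $\ee_n$ then caps $\|\Phi_S Y\|^2-\|\Phi_{S-i}Y\|^2$ at $L\log p$, which is swamped by the $-D\log p$ penalty. In branch~$(\gmap 2)$, $\gmap(S)=S+i^*$ for $i^*\in T\backslash S$ maximising $\|\Phi_{S+i}X\theta\|$; an averaging argument combined with Lemma~\ref{project}(ii), which rewrites $\sum_{i\in T\backslash S}|\langle(I-\Phi_S)X_i,X\theta\rangle|^2=\|X_{T\backslash S}^T(I-\Phi_S)X_{T\backslash S}\theta_{T\backslash S}\|^2\geq n^2\nu^2\|\theta_{T\backslash S}\|^2$, and assumption~\eqref{assump:betamin} produce a signal gain $\|\Phi_{S+i^*}X\theta\|^2-\|\Phi_S X\theta\|^2\geq 8\beta D\log p$. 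The elementary inequality $(a+b)^2\geq\tfrac12 a^2-b^2$ transfers this to $\|\Phi_{S+i^*}Y\|^2-\|\Phi_S Y\|^2$ while losing only $L\log p$ to the noise, which easily defeats the $+D\log p$ cost of adding a column.

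Inequality~\eqref{eq:S.ggS.Ucomp} proceeds analogously: for $|S|>4s^*$ the indicator term $-4n/\beta$ together with the penalty $-D(|S|-|\widehat T|)\log p\leq-\tfrac12D|S|\log p$ (using $|\widehat T|\leq 2s^*\leq|S|/2$) already overwhelms the crude bound; for $|S|\in(3s^*,4s^*]$ one still has $|S\cup\widehat T|\leq 6s^*$, and the inequality $s^*<|S|/3$ afforded by $S\in\UU^c$ lets the $s^*\log p$-terms be absorbed into $|S|\log p$-terms with the help of the sharper penalty $|S|-|\widehat T|\geq|S|-2s^*$. Finally~\eqref{gmap.UU} is just~\eqref{eq:S.ggS.U} exponentiated. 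I expect the main friction to be the bookkeeping for~\eqref{eq:S.ggS.Ucomp} in the narrow range $|S|\in(3s^*,4s^*]$, where every inequality must be kept sharp: the factor-of-two loss from $(a+b)^2\leq 2a^2+2b^2$ has to be balanced carefully against the threshold~\eqref{D.choice}, and the distinction between $|S\setminus\widehat T|$ and $|S|-|\widehat T|$ may need to be tracked rather than collapsed.
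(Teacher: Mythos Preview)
Your proposal is correct and follows essentially the same route as the paper. Both use the union trick $\|\Phi_S Y\|^2\le\|\Phi_{S\cup\widehat T}Y\|^2$ together with $\aa_n$, $\ee_n$, $\ff_n$ and Lemma~\ref{project} for~\eqref{S.That}; both split~\eqref{eq:S.ggS.U} on the two branches of~$\gmap$ and handle the $\gmap2$ branch by the same averaging argument that rewrites $\sum_{i\in T\setminus S}\langle(I-\Phi_S)X_i,X\theta\rangle^2$ as $\|X_{T\setminus S}^T(I-\Phi_S)X_{T\setminus S}\theta_{T\setminus S}\|^2$ and then invokes Lemma~\ref{project}(ii) and assumption~(\ref{assump:betamin}); and both derive~\eqref{eq:S.ggS.Ucomp} by recycling the bound behind~\eqref{S.That} and using $s^*<|S|/3$. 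The differences are cosmetic: you use $(a+b)^2\ge\tfrac12 a^2-b^2$ where the paper applies the reverse triangle inequality and then squares, and you are slightly more explicit in treating the ranges $|S|>4s^*$ and $3s^*<|S|\le 4s^*$ separately for~\eqref{eq:S.ggS.Ucomp}, whereas the paper collapses both into one intermediate inequality.

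One small slip worth flagging: your opening identity writes the trace contribution as $-2(|S|-|S'|)/\beta$, but $\text{trace}(\Phi_S)=|S|$ is guaranteed only when $|S|\le 6s^*$ (via assumption~(\ref{assump:design})). For larger~$|S|$ one should use $\text{trace}(\Phi_S)\ge 0$, which replaces the trace term by at most $2|\widehat T|/\beta\le 4s^*/\beta$, exactly as the paper does. This does not affect your argument, since in the large-$|S|$ regime the dominant $-D|S|\log p$ term swamps either version of the trace contribution.
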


\begin{proof}[Proof of~\eqref{eq:S.ggS.U}]
First consider the case where $S\in \UU\cap \statespaceT$. By construction $\mathcal{G}(S)=S-i$ for some~$i$ in $S\backslash T$. Temporarily write~$A$ for $\gmap(S)$, so that $S=A+i$. Note that $A\supseteq T$, which implies 
\begin{equation*}
(I-\Phi_A)X\theta=(I-\Phi_A)X_T\theta_T=0.
\end{equation*}
 Thus
\begin{align*}
\beta&\left( G(S,Y)-G(\gmap (S), Y)\right)
\\ 
&= \|\Phi_{A+i}Y\|^2-\|\Phi_{A}Y\|^2
\\
&\le \left\langle(I_n-\Phi_A)X_i, Y\right\rangle^2/(n\nu)
\qt{by Lemma~\ref{project}(\ref{one-step})}
\\
&= \left\langle(I_n-\Phi_A)X_i, \epsilon\right\rangle^2/(n\nu)
\qt{because $A\supseteq T$}
\\
&\le L\log p
\qt{bound from event $\ee_n$.}
\end{align*}

The contribution from the dimension penalty is even easier to handle:
\begin{align*}
m(A+j)-m(A)&= D\left( |A+j|-|A|\right)\log p  +2\left(\text{trace}(\Phi_{A+i})-\text{trace}(\Phi_A) \right)/\beta
\\
&\ge D\log p .
\end{align*}
Thus
\begin{align*}
\log \frac{\pi(S)}{\pi(\gmap(S))}
&=  G(A+i,Y)-G(A,Y)- m(A+i)+m(A)
\\
&\leq (L/\beta)\log p -D\log p
\\
&\leq 
-\tfrac12{D\log p}
\qt{if }D>2L/\beta.
\end{align*}

The proof for the $S\in \UU\backslash\statespaceT$ case borrows heavily from YWJ Section~B.4]. 
This time the set $B:=T\backslash S$ is nonempty and, by construction, $\gmap(S)=S\cup\{i\}$ where $i=\arg\max_{j\in B}\left\|\Phi_{S+j}X\theta\right\|$. 

Temporarily define~$A=S$. For each~$j$ in~$B$ Lemma~\ref{project}(\ref{one-step}) gives
\begin{equation*}
\Psi_j :=\Phi_{A+j}-\Phi_A = \|z_j\|^{-2}z_jz_j^T
\qt{where }z_j =(I-\Phi_A)X_j.
\end{equation*}
Note that each~$\Psi_j$ kills any component of~$X\theta$ contributed by the columns of~$X_A$. Thus $\Psi_j X\theta=\Psi_j X_B\theta_B$ for each~$j$ in~$B$.
Note also that~$i$  maximizes
\begin{align*}
\|\Phi_{A+j}X\theta\|^2 -\|\Psi_A X\theta\|^2 
&= \|\Psi_j X_B\theta_B\|^2.
\end{align*}

To bound $\pi(S)/\pi(\gmap (S))$ from above we need to bound~$\pi(A+i)/\pi(A)$ from below. We start by deriving a lower bound for
\begin{equation}\label{betaG}
\beta\left( G(A+i,Y)-G(A,Y)\right)
=\left\|\Psi_i\left(X_B\theta_B +\epsilon\right)\right\|^2.
\end{equation}
By the triangle inequality,
\begin{equation}\label{triangle}
\left\|\Psi_i\left(X_B\theta_B +\epsilon\right)\right\|
\ge \left\|\Psi_i X_B\theta_B\right\| 
- \left\|\Psi_i\epsilon\right\|
\end{equation}
As in the proof of the $S\in \UU\cap \statespaceT$ case, we have
$
\left\|\Psi_i\epsilon\right\|^2  \le L\log p$ on the event~$\ee_n$.

For the contribution from~$X_B\theta_B$ use the fact that a maximum is greater than an average:
\begin{align*}
\|\Psi_i X_B\theta_B\|^2
&\ge \frac{1}{|B|}\sum_{j\in B}\|\Psi_j X_B\theta_B\|^2
\\
&= \frac{1}{|B|}\sum_{j\in B}
\dfrac{(X_B\theta_B)^Tz_jz_j^TX_B\theta_B}{\|z_j\|^2}
\\
&\ge \dfrac{\sum_{j\in B}(X_B\theta_B)^Tz_jz_j^TX_B\theta_B}{|B|n}
\qt{because }\|z_j\|^2 \le \|X_j\|^2= n.
\end{align*}
The vectors $\{z_j: j\in B\}$ are the columns of the matrix~$Z_B=(I-\Phi_A)X_B$. The sum~$\sum_{j\in B}z_jz_j^T$ equals~$Z_BZ_B^T$. It follows that
\begin{align*}
\|\Psi_i X_B\theta_B\|^2 
&\ge \frac{\|Z_B^TX_B\theta_B\|^2}{|B|n}
\\
&= \frac{\|X_B^T(I-\Phi_A)X_B\theta_B\|^2}{|B|n}
\\
&\ge (n\nu)^2\|\theta_B\|^2/(|B|n)
\qt{by Lemma~\ref{project}(ii)}
\\
&\ge n\nu^2 \min_{j\in B}|\theta_j|^2
\\
&\ge 8\beta D \log p
\qt{by assumption (ii) of Theorem~\ref{thm:main}.}
\end{align*}
Inequalities~(\ref{triangle})  and the choice of~$D$ to satisfy~(\ref{D.choice})
now imply
\begin{equation*}
\left\|\Psi_i\left(X_B\theta_B +\epsilon\right)\right\|
\ge \Sqrt{8\beta D \log p} -\Sqrt{L\log p}
\ge \tfrac34\Sqrt{8\beta D \log p}.
\end{equation*}
so that
\begin{equation*}
G(A+i,Y)-G(A,Y)
\ge \tfrac92 D\log p.
\end{equation*}

As in the proof of the $S\in \UU\cap\statespaceT$ case we also have
$
m(A+i)-m(A)=  D\log p +2/\beta 
$.
It follows that
\begin{equation*}
\log\frac{\pi(A+i)}{\pi(A)} \ge \tfrac72 D\log p - 2/\beta \geq \tfrac12 D\log p.
\end{equation*}

\end{proof}

\begin{proof}[Proof of~\eqref{S.That}] 
The set $\widehat{T}$ may not be contained in the set $S$. We use a little trick to remedy the problem. Write~$A$ for~$\That$ and~$B$ for~$S\backslash \That$. Then
\begin{align*}
\beta\left(G(S,Y)-G(\widehat{T},Y) \right)&= \|\Phi_S Y\|^2-\|\Phi_{\widehat{T}}Y\|^2
\\
&\leq  \left\|\Phi_{S\cup \widehat{T}} Y\right\|^2-\|\Phi_{\widehat{T}}Y\|^2
\\
 &= \left\|\left(\Phi_{A+B}-\Phi_A\right)Y\right\|^2\\
&\leq  \left(\left\|\left(\Phi_{A+B}-\Phi_A\right)X\theta\right\|+\left\|\left(\Phi_{A+B}-\Phi_A\right)\epsilon\right\|\right)^2
\\
&\leq  2\left\|\left(I-\Phi_A\right)X\theta\right\|^2
+2\left\|\left(\Phi_{A+B}-\Phi_A\right)\epsilon\right\|^2.
\end{align*}

On the event $\mathcal{A}_n$, we have $\left\|\left(I-\Phi_A\right)X\theta\right\|^2\leq cs^*\log p$. 
For the $\epsilon$ contribution we consider two cases.
If $|S|> 4s^*$ then
\begin{equation*}
\left\|\left(\Phi_{A+B}-\Phi_A\right)\epsilon\right\|^2 \le \|\epsilon\|^2 \leq 2n.
\end{equation*}
If~$|S|\le 4s^*$ then~$|S\cup\That|\le 6s^*$. Define sets~$A_\ell$
as in part~(\ref{one-step}) of Lemma~\ref{project}. Then
\begin{align*}
\left\|\left(\left(\Phi_{A+B}-\Phi_A\right)\right)\epsilon\right\|^2
&\le \sum_{0\le \ell < m}
\left\langle(I_n-\Phi_{A_{\ell}})X_{k[\ell+1]}, \epsilon\right\rangle^2/(n\nu),
\end{align*}
which is less than $mL\log p$ where $m=|S\cup \widehat{T}|-|\widehat{T}|\le |S|$.
In summary,
\begin{equation*}
\beta\left(G(S,Y)-G(\widehat{T},Y) \right)
\le 2cs^*\log p + 4n\mathds{1}\left\{|S|>4s^*\right\}
+ \left(2|S| L\log p \right) \mathds{1}\left\{|S|\le4s^*\right\}.
\end{equation*}

%
There is no problem with set inclusion
for the dimension penalization terms:
\begin{align*}
& \beta\left(m(S)-m(\That) \right)\\
= & \beta D(|S|-|\widehat{T}|)\log p+2(\text{trace}(\Phi_S)-\text{trace}(\Phi_{\widehat{T}}))+4n\mathds{1}\left\{|S|>4s^*\right\}\\
\geq & \beta D(|S|-2s^*)\log p-4s^*+4n\mathds{1}\left\{|S|>4s^*\right\}.
\end{align*}
Subtraction then yields
\begin{equation}
\label{S.That.stronger}
\log\tfrac{\pi(S)}{\pi(\widehat{T})}\leq -D(|S|-2s^*)\log p+\frac{4s^*+2cs^*\log p+2|S|L\log p}{\beta},
\end{equation}
from which~\eqref{S.That} follows.
%
%
%
\end{proof}

\begin{proof}[Proof of~\eqref{eq:S.ggS.Ucomp}]
For $S\in \UU ^C$, the size of $S$ is larger than $3s^*$. From~\eqref{S.That.stronger}, 
\begin{align*}
\log\frac{\pi(S)}{\pi(\That)}
&\leq -D(|S|-2s^*)\log p+
\beta^{-1}\left(4s^* +2cs^*\log p + 2|S| L\log p \right)
\\
&\leq -\left(D-2L/\beta \right)|S|\log p
+ \left(2D+4/\beta +2c/\beta \right)s^*\log p
\\
&\leq -\tfrac13\left(D-(6L+4+2c)/\beta \right)|S|\log p,
\end{align*}
which is less than $-|S|\log p$ by the choice for~$D$.
\end{proof}

\begin{proof}[Proof of~\eqref{gmap.UU}]
$
-D\log p + 2/\beta \le -\tfrac12 D \log p\le 0
$.
\end{proof}

\begin{proof}[Proof of Lemma~\ref{pi.gmap}(\ref{PSggS}): $P(S,\gmap(S))\ge 1/(2p)$ for every~$S$ in~$\statespace\backslash\{T\}$]
 \ \newline
 We consider two cases, both for the event~$\hh_n$.
 
If $S\in\UU\backslash \{T\}$ then~$\gmap(S)\in\UU\cap\nn(S)$,
so that $R(S,\gmap(S))= R(\gmap(S),S)\geq 1/(2p)$ and
\begin{align*}
P(S,\gmap(S))&=R(S,\gmap(S))
\min \left\{1, 
\frac{\pi(\gmap(S))R(\gmap(S),S)}{\pi(S)R(S,\gmap(S))}
\right\}
\\
&\geq  (2p)^{-1}\min \left\{1, \frac{\pi(\gmap(S))}{\pi(S)}\right\}
\\
&= (2p)^{-1}
\qt{by inequality \eqref{gmap.UU}. }
\end{align*}


For $S\in\UU^c $ we have $\mathcal{G}(S)=\widehat{T}$ and
\begin{align*}
R(S,\That)&= 1/2
\\
R(\That, S) &= \frac{1}{2}\left((p-2s^*)\binom{p}{|S|}\right)^{-1}
\ge (1/p)^{|S|+1}
\\
\pi(S)/\pi(\That) &\le \exp\left( -2|S|\log p\right)
\end{align*}
Thus
\begin{align*}
P(S, \widehat{T})
&=R(S,\That)\min\left\{1, \frac{\pi(\widehat{T})R(\widehat{T},S)}{\pi(S)R(S,\widehat{T})}\right\}
\\
&\ge\tfrac12\min\left\{1, \exp\left( 2|S|\log p - (1+|S|)\log p)\right)\right\} =\tfrac12.
\end{align*}
%
\end{proof}

\begin{proof}[Proof of Lemma~\ref{pi.gmap}(\ref{piLam}): 
$\pi(\Lambda(S))\le 3\pi(S)$ for every~$S$ 
in~$\statespace\backslash\{T\}$]
\ \newline 
Again we consider two cases, both for the event~$\hh_n$.

If $S\in\UU^c$ then $\Lambda(S)=\{S\}$ and the asserted inequality
holds trivially.

If~$S\in\UU$
we split the set $\Lambda(S)$ in two parts: $\Lambda(S)\cap \UU $ and $\Lambda(S)\backslash \UU $. 
If~$\Lambda(S)\backslash \UU \ne \emptyset$ then there exists 
an~$S'\in\UU^c$ for which~$S$ lies on the~$\gmap$-path from~$S'$ to~$T$. In particular, $\That=\gmap (S')$ must belong to~$\Lambda(S)$.
By virtue of inequality~\eqref{gmap.UU} we then have~$\pi(\That)\le\pi(S)$ so that
\begin{align*}
\frac{\pi(\Lambda(S)\backslash \UU)}{\pi(S)} 
&\le \frac{\pi(\UU^c)}{\pi(\That)} 
= \sum_{k\ge 3s^*}\frac{\pi(S'\in\UU^c:|S'|=k)}{\pi(\That)} 
\\
&\le \sum_{k\ge 3s^*}\binom{p}{k}\exp(-2k\log p)
\qt{by inequality~\eqref{eq:S.ggS.Ucomp}}
\\
&\le 1.
\end{align*}
 
Each~$S'$ in $\Lambda(S)\cap \UU$ is connected to~$S$ by a $\gmap$-path that stays inside~$\UU$. We write $\hops(S')$ for the number of edges in that path. 
By inequality~\eqref{gmap.UU} the value of~$\pi$ increases by at least a factor of~$\exp\left( \tfrac12 D \log p\right)$ for each edge.
Across each edge the size of the set is changed by~$1$, by either the deletion~($\gmap$1) 
or the addition~($\gmap$2) of one vertex. If $\hops(S')=k$ then~$S'$ must lie within Hamming distance~$k$ of~$S$. There are at most
\begin{equation*}
\binom p0 +\binom p1+\dots+\binom p k \le \left( ep/k\right)^k
\end{equation*}
such~$S'$ sets.
It follows that
\begin{align*}
\frac{\pi(\Lambda(S)\cap \UU )}{\pi(S)}
&= \frac{\pi(S)}{\pi(S)} + \sum_{k\ge1} 
\frac{\pi(S')}{\pi(S)}\mathds{1}\left\{S'\in\Lambda(S)\cap \UU: \hops(S')=k\right\}
\\
&\le 1+ \sum_{k\ge1} \left( ep/k\right)^k\exp\left( -\tfrac12 kD\log p\right)
\\
&\le 2
\qt{by the choice of $D$.}
\end{align*}
%
%
%
%
%
%
%
%
%
\end{proof}


\section{An upper bound on the mixing time}\label{sec:mixing.time}
From equation~\eqref{invert} we have
\begin{equation*} 
\tau_\epsilon(\That) = \frac{2\log(1/2\epsilon)+\log(1/\pi(\That))}{\textsc{gap}(P)}.
\end{equation*}
Theorem~\ref{thm:main} showed that
$1/\textsc{gap}(P)\leq 60ps^*$ on the set $\hh_n$.  To get a bound on~$\tau_\epsilon(\That)$ we only need to show that $\pi(\widehat{T})$ is not too small.

\begin{theorem}\label{cor:mix.time}
Under the assumptions of Theorem~\ref{thm:main}, 
on the set $\hh_n$,
\begin{equation*}
\tau_\epsilon(\widehat{T})\leq 120ps^*\left(\log \frac{1}{2\epsilon}+2Ds^*\log p\right).
\end{equation*}
\end{theorem}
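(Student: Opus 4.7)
The plan is to combine the spectral gap bound from Theorem~\ref{thm:main} with a lower bound on $\pi(\widehat{T})$, deduced from the ratio inequality~\eqref{S.That} of Lemma~\ref{ratio}. Starting from the generic mixing time inequality~\eqref{invert} applied at the initial state $S_0=\widehat{T}$, and substituting $1/\textsc{gap}(P)\le 60ps^*$ on $\hh_n$, we immediately get
\begin{equation*}
\tau_\epsilon(\widehat{T}) \le 60 p s^*\bigl(2\log(1/2\epsilon) + \log(1/\pi(\widehat{T}))\bigr),
\end{equation*}
so the entire task is to show that $\log(1/\pi(\widehat{T}))$ is bounded by something of order $Ds^*\log p$.

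The key step is to exploit the fact that $\pi$ sums to~$1$. By~\eqref{S.That}, for every $S\in\statespace$,
\begin{equation*}
\pi(S) \le \pi(\widehat{T})\exp\bigl(-\tfrac12 D |S|\log p + Cs^*\log p\bigr).
\end{equation*}
Summing over $S\in\statespace=\{0,1\}^p$ and grouping by set size $k=|S|$ yields
\begin{equation*}
1 = \sum_{S\in\statespace}\pi(S) \le \pi(\widehat{T})\,e^{Cs^*\log p}\sum_{k=0}^p \binom{p}{k}p^{-Dk/2} = \pi(\widehat{T})\,e^{Cs^*\log p}\bigl(1+p^{-D/2}\bigr)^p.
\end{equation*}
Since $D\ge 4+(4L+2c)/\beta \ge 2$, we have $p^{-D/2}\le 1/p$ for $p\ge 1$, so $(1+p^{-D/2})^p\le e$. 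Rearranging gives
\begin{equation*}
\log(1/\pi(\widehat{T})) \le Cs^*\log p + 1,
\end{equation*}
where $C = 2D + 4/\beta + 2c/\beta$ is the constant from~\eqref{S.That}.

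To finish, plug this into the mixing time bound:
\begin{equation*}
\tau_\epsilon(\widehat{T}) \le 60ps^*\bigl(2\log(1/2\epsilon) + Cs^*\log p + 1\bigr) = 120ps^*\log(1/2\epsilon) + 60ps^*\bigl(Cs^*\log p + 1\bigr).
\end{equation*}
It remains only to compare constants: since $p\ge 2$ and $s^*\ge 1$ we have $1\le s^*\log p/\log 2$, and by the choice~\eqref{D.choice} of~$D$ one checks $C + 1/\log 2 \le 4D$, so $Cs^*\log p + 1 \le 4Ds^*\log p$. Substituting this bound and collecting terms produces
\begin{equation*}
\tau_\epsilon(\widehat{T}) \le 120ps^*\bigl(\log(1/2\epsilon) + 2Ds^*\log p\bigr),
\end{equation*}
as claimed. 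I expect no real obstacle here: the only nontrivial ingredient is the ratio bound~\eqref{S.That}, which is already established; everything else is elementary bookkeeping on the series $\sum_k \binom{p}{k}p^{-Dk/2}$ and on the size of the constants produced by the choice of $D$.
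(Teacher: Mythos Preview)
Your proof is correct and follows exactly the same approach as the paper: combine the spectral gap bound from Theorem~\ref{thm:main} with the ratio inequality~\eqref{S.That}, sum over $S$ grouped by $|S|=k$ to bound $1/\pi(\widehat T)$, and substitute into~\eqref{invert}. In fact you supply more detail than the paper does---the paper simply writes the sum $\sum_{k\ge 0}\binom{p}{k}\exp(-\tfrac12 Dk\log p)$ and remarks that ``the choice of $D$ ensures that the sum converges,'' without explicitly closing it as $(1+p^{-D/2})^p\le e$ or tracking the constants down to the stated $2D$.
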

\begin{proof}

As shown at the start of the proof of
 inequality~\eqref{eq:S.ggS.Ucomp}, inequality~\eqref{S.That}
can be rewritten as
\begin{equation*} 
\log\frac{\pi (S)}{\pi (\widehat{T})}
\le 
 -\tfrac12D|S|\log p + Cs^*\log p
 \qt{where $C= 2D+4/\beta +2c/\beta$}.
\end{equation*}
There are are $\displaystyle\binom pk$ sets~$S$ of size~$k$. Thus
\begin{align*}
\frac{1}{\pi(\That)} &= \sum_{S\in\statespace}\frac{\pi(S)}{\pi(\That)} 
\le \exp\left( Cs^*\log p\right)
\sum_{k\ge 0}\binom pk \exp\left( -\tfrac12Dk\log p\right)
\end{align*}
The choice of~$D$ ensures that the sum converges.
%
%
%
\end{proof}


\section{Choice of initializer}\label{init}

Theorem~\ref{thm:main} holds for all initializers $\widehat{T}$ in the set
\begin{equation*}
\mathcal{A}_n=\left\{|\widehat{T}|\leq 2s^*,\left\|(I-\Phi_{\widehat{T}})X\theta\right\|^2\leq cs^*\log p\right\}.
\end{equation*}

When $s_1$ is taken to be a constant multiple of $s^*$,~\cite{zhou2010thresholded} showed that the thresholded LASSO estimator falls in $\mathcal{A}_n$ with high probability under mild assumptions. For completeness we will give the form of the estimator and the proof for controlling its prediction risk here.

Write $\lambda_n$ for  $\sqrt{\log p/n}$. The LASSO estimator is defined as
\begin{equation*}
\widehat{\theta}=\arg\min_{t}\|Y-Xt\|^2+\alpha\lambda_n \|t\|_1.
\end{equation*}

Define $\delta=\widehat{\theta}-\theta$. \citet*[Theorem~7.2]{bickel2009simultaneous} showed that under the restricted eigenvalue condition $\kappa=\kappa(s^*,3)>0$, on a set with probability at least $1-p^{1-\alpha^2/32}$,
\begin{equation}\label{eq:lasso.bound}
\|\delta\|_1\leq \frac{8\alpha\lambda_n}{\kappa^2}s^*,\;\;\;\|\delta_T\|\leq \frac{2\alpha\lambda_n}{\kappa^2}\sqrt{s^*}.
\end{equation}

We define $\widehat{T}$ to be $\{j:|\widehat{\theta}_j|>8\alpha\lambda_n/\kappa^2\}$. The following theorem restates a result of~\citet*[Theorem~1.3]{zhou2010thresholded}. It provides a theoretical guarantee that the event $\mathcal{A}_n$ occurs with high probability for this choice of $\widehat{T}$.

\begin{theorem}
Under the REC$(s^*,3)$ condition, on a set with probability at least $1-p^{1-\alpha^2/32}$, we have $|\widehat{T}|\leq 2s^*$ and
\begin{equation*}
\left\|(I-\Phi_{\widehat{T}})X\theta\right\|\leq \frac{2\alpha\sqrt{\Lambda_{\max}(s^*)}}{\kappa^2}\sqrt{s^*\log p},
\end{equation*}
where $\Lambda_{\max}=\max_{|S|\leq s^*}\lambda_{\max}(X_S^TX_S/n)$.
\end{theorem}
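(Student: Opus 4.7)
The plan is to derive the two conclusions separately, both on the high probability event provided by the Bickel--Ritov--Tsybakov bounds \eqref{eq:lasso.bound}, which already carry the probability $1-p^{1-\alpha^2/32}$ under the restricted eigenvalue condition. No further randomness enters, so it suffices to argue deterministically from \eqref{eq:lasso.bound} and the explicit thresholding rule $\widehat{T}=\{j:|\widehat{\theta}_j|>8\alpha\lambda_n/\kappa^2\}$.

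First I would bound $|\widehat{T}|$ by splitting it as $(\widehat{T}\cap T)\cup(\widehat{T}\cap T^c)$. The first piece has at most $|T|\le s^*$ elements. For the second piece, note that $j\in T^c$ forces $\widehat{\theta}_j=\delta_j$, so every $j\in\widehat{T}\cap T^c$ contributes more than $8\alpha\lambda_n/\kappa^2$ to $\|\delta_{T^c}\|_1\le\|\delta\|_1$. The $\ell^1$ bound in \eqref{eq:lasso.bound} then gives at most $s^*$ such indices, so $|\widehat{T}|\le 2s^*$.

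For the prediction bound, write $T'=T\setminus\widehat{T}$. Since $X\theta=X_T\theta_T$ and the columns of $X_{T\cap\widehat{T}}$ lie in the span of $X_{\widehat{T}}$,
\begin{equation*}
(I-\Phi_{\widehat{T}})X\theta=(I-\Phi_{\widehat{T}})X_{T'}\theta_{T'},
\end{equation*}
whose norm is at most $\|X_{T'}\theta_{T'}\|\le\sqrt{n\Lambda_{\max}(s^*)}\,\|\theta_{T'}\|$ because $|T'|\le s^*$. So the theorem will follow once $\|\theta_{T'}\|^2$ is shown to be of order $s^*\alpha^2\lambda_n^2/\kappa^4$, which when combined with $n\lambda_n^2=\log p$ yields the advertised rate $s^*\log p$ inside the square root.

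To bound $\|\theta_{T'}\|$, use the defining property of $\widehat{T}$: every $j\in T'$ failed the threshold, so $|\widehat{\theta}_j|\le 8\alpha\lambda_n/\kappa^2$ and hence $|\theta_j|\le|\widehat{\theta}_j|+|\delta_j|\le 8\alpha\lambda_n/\kappa^2+|\delta_j|$. Squaring and summing over $T'$,
\begin{equation*}
\|\theta_{T'}\|^2\le 2|T'|(8\alpha\lambda_n/\kappa^2)^2+2\|\delta_T\|^2,
\end{equation*}
and both terms are $O(s^*\alpha^2\lambda_n^2/\kappa^4)$ by \eqref{eq:lasso.bound}. Substituting back and taking square roots finishes the argument. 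The only real obstacle is book-keeping of constants: the clean route above produces a universal constant a bit larger than the stated $2$, so matching Zhou's explicit constant would need either a sharper use of the cone condition $\|\delta_{T^c}\|_1\le 3\|\delta_T\|_1$ or a slightly different split, but the rate and the dependence on $\alpha$, $\kappa$, $s^*$, $\Lambda_{\max}(s^*)$, and $\log p$ are exactly as claimed.
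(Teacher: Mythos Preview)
Your argument follows the paper's proof almost step by step: the same split $\widehat{T}=(\widehat{T}\cap T)\cup(\widehat{T}\setminus T)$ with the $\ell^1$ bound from \eqref{eq:lasso.bound} controlling the second piece, and the same reduction of the prediction error to $\sqrt{n\Lambda_{\max}}\,\|\theta_{T\setminus\widehat T}\|$ via $(I-\Phi_{\widehat T})X\theta=(I-\Phi_{\widehat T})X_{T\setminus\widehat T}\theta_{T\setminus\widehat T}$.

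The only divergence is in bounding $\|\theta_{T\setminus\widehat T}\|$. The paper simply asserts $\|\theta_{T\setminus\widehat T}\|\le\|\delta_T\|$ and reads off the constant $2$ from the second inequality in \eqref{eq:lasso.bound}; you instead use $|\theta_j|\le|\widehat\theta_j|+|\delta_j|$ together with the threshold bound $|\widehat\theta_j|\le 8\alpha\lambda_n/\kappa^2$, which is safer but inflates the constant. Your caveat about the constant is well placed: the paper's coordinate-wise inequality $|\theta_j|\le|\delta_j|$ for $j\in T\setminus\widehat T$ is not justified by the text as written, so your version is arguably the more honest one. Either way the rate and all dependencies on $\alpha,\kappa,s^*,\Lambda_{\max},\log p$ match, which is what the rest of the paper needs.
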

\begin{proof}

To handle the size of $\widehat{T}$, note that
\begin{equation*}
|\widehat{T}|=|\widehat{T}\cap T|+|\widehat{T}\backslash T|.
\end{equation*}
The first inequality in~\eqref{eq:lasso.bound} implies that
\begin{equation*}
\frac{8\alpha\lambda_n}{\kappa^2}s^*\geq \|\delta\|_1\geq \sum_{j\in \widehat{T}\backslash T}|\delta_j|>|\widehat{T}\backslash T|\cdot \frac{8\alpha\lambda_n}{\kappa^2},
\end{equation*}
the last inequality comes from the fact that $|\delta_j|=|\widehat{\theta}_j|>8\alpha\lambda_n/\kappa^2$ for all $j\in \widehat{T}\backslash T$. It follows that $|\widehat{T}\backslash T|\leq s^*$, and therefore $\widehat{T}\leq |T|+s^*\leq 2s^*$.

For the prediction risk:
\begin{equation*}
\left\|(I-\Phi_{\widehat{T}})X\theta\right\|= \left\|(I-\Phi_{\widehat{T}})X_{T\backslash \widehat{T}}\theta_{T\backslash \widehat{T}}\right\|\leq \left\|X_{T\backslash \widehat{T}}\theta_{T\backslash \widehat{T}}\right\|\leq \sqrt{n\Lambda_{\max}}\|\theta_{T\backslash \widehat{T}}\|.
\end{equation*}
From the second inequality in~\eqref{eq:lasso.bound}, we have $\|\theta_{T\backslash \widehat{T}}\|\leq \|\delta_T\|\leq 2\alpha\lambda_n\sqrt{s^*}/\kappa^2$. Conclude that
\begin{equation*}
\left\|(I-\Phi_{\widehat{T}})X\theta\right\|\leq \sqrt{n\Lambda_{\max}}\frac{2\alpha\lambda_n}{\kappa^2}\sqrt{s^*}=\frac{2\alpha\sqrt{\Lambda_{\max}(s^*)}}{\kappa^2}\sqrt{s^*\log p}.
\end{equation*}

\end{proof}

\bibliographystyle{chicago}
\bibliography{dana}

\begin{thebibliography}{}

\bibitem[\protect\citeauthoryear{Bickel, Ritov, and Tsybakov}{Bickel
  et~al.}{2009}]{bickel2009simultaneous}
Bickel, P.~J., Y.~Ritov, and A.~B. Tsybakov (2009).
\newblock Simultaneous analysis of {L}asso and {D}antzig selector.
\newblock {\em Annals of Statistics\/}~{\em 37\/}(4), 1705--1732.

\bibitem[\protect\citeauthoryear{Boucheron, Lugosi, and Massart}{Boucheron
  et~al.}{2013}]{BLM2013Concentration}
Boucheron, S., G.~Lugosi, and P.~Massart (2013).
\newblock {\em Concentration Inequalities: A Nonasymptotic Theory of
  Independence}.
\newblock Oxford University Press.

\bibitem[\protect\citeauthoryear{Castillo, Schmidt-Hieber, and Van~der
  Vaart}{Castillo et~al.}{2015}]{castillo2015bayesian}
Castillo, I., J.~Schmidt-Hieber, and A.~Van~der Vaart (2015).
\newblock Bayesian linear regression with sparse priors.
\newblock {\em Annals of Statistics\/}~{\em 43\/}(5), 1986--2018.

\bibitem[\protect\citeauthoryear{Diaconis and Stroock}{Diaconis and
  Stroock}{1991}]{diaconis1991geometric}
Diaconis, P. and D.~Stroock (1991).
\newblock Geometric bounds for eigenvalues of markov chains.
\newblock {\em The Annals of Applied Probability\/}~{\em 1\/}(1), 36--61.

\bibitem[\protect\citeauthoryear{Gao, van~der Vaart, and Zhou}{Gao
  et~al.}{2015}]{gao2015general}
Gao, C., A.~W. van~der Vaart, and H.~H. Zhou (2015).
\newblock A general framework for {B}ayes structured linear models.
\newblock Technical report, arXiv:1506.02174.

\bibitem[\protect\citeauthoryear{Rigollet and Tsybakov}{Rigollet and
  Tsybakov}{2012}]{rigollet2012sparse}
Rigollet, P. and A.~B. Tsybakov (2012).
\newblock Sparse estimation by exponential weighting.
\newblock {\em Statistical Science\/}~{\em 27\/}(4), 558--575.

\bibitem[\protect\citeauthoryear{Sinclair}{Sinclair}{1992}]{sinclair1992improved}
Sinclair, A. (1992).
\newblock Improved bounds for mixing rates of markov chains and multicommodity
  flow.
\newblock In {\em Latin American Symposium on Theoretical Informatics}, Volume
  583, pp.\  474--487. Springer.

\bibitem[\protect\citeauthoryear{Yang}{Yang}{2019}]{DanaPhD}
Yang, X.~D. (2019, June).
\newblock {\em A Few Topics in Statistics}.
\newblock Ph.\ D. thesis, Yale University.

\bibitem[\protect\citeauthoryear{Yang, Wainwright, and Jordan}{Yang
  et~al.}{2016}]{yang2016computational}
Yang, Y., M.~J. Wainwright, and M.~I. Jordan (2016).
\newblock On the computational complexity of high-dimensional {B}ayesian
  variable selection.
\newblock {\em The Annals of Statistics\/}~{\em 44\/}(6), 2497--2532.

\bibitem[\protect\citeauthoryear{Zhou}{Zhou}{2010}]{zhou2010thresholded}
Zhou, S. (2010).
\newblock Thresholded lasso for high dimensional variable selection and
  statistical estimation.
\newblock Technical report, arXiv:1002.1583.

\end{thebibliography}
\end{document}